\documentclass[a4paper,oneside, 11 pt, reqno]{amsart}

\usepackage{enumerate}
\usepackage{amsmath,amsthm,amscd,amsfonts,amssymb,mathrsfs}
\usepackage{latexsym,amsmath}
\usepackage{mathrsfs}
\makeatletter
\@namedef{subjclassname@2010}{%
\textup{2010} Mathematics Subject Classification}
\makeatother



\newcommand{\ncom}{\newcommand}
\ncom{\beqn}{\begin{eqnarray*}}
\ncom{\eeqn}{\end{eqnarray*}}
\ncom{\beq}{\begin{eqnarray}}
\ncom{\eeq}{\end{eqnarray}}
\ncom{\cal}{\mathcal}
\ncom{\eop}{\hfill{{\rule{2.5mm}{2.5mm}}}}
\ncom{\eoe}{\hfill{{\rule{1.5mm}{1.5mm}}}}
\ncom{\eof}{\hfill{{\rule{1.5mm}{1.5mm}}}}
\ncom{\hone}{\mbox{\hspace{1em}}}
\ncom{\htwo}{\mbox{\hspace{2em}}}
\ncom{\hthree}{\mbox{\hspace{3em}}}
\ncom{\hfour}{\mbox{\hspace{4em}}}
\ncom{\hsev}{\mbox{\hspace{7em}}}
\ncom{\vone}{\vskip 2ex}
\ncom{\vtwo}{\vskip 4ex}
\ncom{\vonee}{\vskip 1.5ex}
\ncom{\vthree}{\vskip 6ex}
\ncom{\vfour}{\vspace*{8ex}}
\ncom{\norm}{\|\;\;\|}
\ncom{\integ}[4]{\int_{#1}^{#2}\,{#3}\,d{#4}}
\ncom{\inp}[2]{\langle{#1},\,{#2} \rangle}
\ncom{\Inp}[2]{\Langle{#1},\,{#2} \Langle}
\ncom{\vspan}[1]{{{\rm\,span}\#1 \}}}
\ncom{\dm}[1]{\displaystyle {#1}}

\newtheorem{theorem}{\bf Theorem}[section]
\newtheorem{proposition}[theorem]{\bf Proposition}
\newtheorem{corollary}[theorem]{\bf Corollary}

\newtheoremstyle
    {remarkstyle}
    {}
    {11pt}
    {}
    {}
    {\bfseries}
    {:}
    {     }
    {\thmname{#1} \thmnumber{#2} }

\theoremstyle{remarkstyle}

\newtheorem{remark}[theorem]{\bf Remark}
\newtheorem{definition}[theorem]{\bf Definition}
\newtheorem{example}[theorem]{\bf Example}

\begin{document}

\title[Hyperexpansive Weighted Translation Semigroups ]
{Hyperexpansive Weighted Translation Semigroups }

\author[G. M. Phatak]{Geetanjali M. Phatak}
\address{Department of Mathematics, S. P. College\\
Pune- 411030, India}
\email{gmphatak19@gmail.com}

\author[V. M. Sholapurkar]{V. M. Sholapurkar}
\address{ Department of Mathematics, S. P. College\\
Pune- 411030, India}
\email{vmshola@gmail.com}

\date{}

\begin{abstract}
The weighted shift operators turn out to be extremely useful in supplying interesting examples of operators on Hilbert spaces. With a view to study a continuous analogue of weighted shifts, M. Embry and A. Lambert initiated the study of a semigroup of operators $\{S_t\}$  indexed by a non-negative real number $t$ and termed it as weighted translation semigroup. The operators $S_t$ are defined on $L^2(\mathbb R_+)$ by using a weight function. In this paper, we continue the work carried out there and obtain characterizations of hyperexpansive weighted translation semigroups in terms of their symbols. We also discuss Cauchy dual of a hyperexpansive weighted translation semigroup. As an application of the techniques developed, we present new proofs of a couple of known results.  
\end{abstract}
\subjclass[2010]{Primary 47B20,  47B37; Secondary 47A10, 46E22}
\keywords{weighted translation semigroup, completely monotone, completely alternating, completely hyperexpansive }
\maketitle
\section{Introduction}
Let $H$ be a complex separable Hilbert space and $\cal B(H)$ the algebra of bounded linear operators on $H.$ Recall that for an orthonormal basis $\{e_n\}$ of $H$ and a bounded sequence $\{\alpha_n\}$ of scalars, a weighted shift operator $T:\{\alpha_n\}$, is defined by $T(e_n)=\alpha_n e_{n+1}$ and extended by linearity and continuity. The flexibility of choosing a weight sequence $\{\alpha_n\}$ allows one to construct several interesting examples of a variety of classes of operators. The class of weighted  shift operators has been systematically studied in \cite{Se}. With a view to develop a continuous analogue of weighted shifts, M. Embry and A. Lambert \cite{EL1} initiated the study of operators that can be defined by using a {\it weight function} (rather than a weight sequence). In fact, for a positive, measurable function $\varphi,$ they constructed a semigroup $\{S_t\}$ of bounded linear operators on $L^2(\mathbb R_+),$ parametrized by a non-negative real number $t$ and termed it as {\it weighted translation semigroup}. In this paper, we further explore the semigroup $\{S_t\},$ provide a variety of examples and present some important properties. 
In section 2, we set the notation and record some definitions required in the sequel. 
In section 3, we study some special types of weighted translation semigroups $\{S_t\}.$ In  \cite{EL1} and \cite{EL2}, M. Embry and A. Lambert characterized hyponormal and subnormal weighted translation semigroups. Here, capitalizing on the theory of associating a special type of non-negative function to a special type of bounded linear operator as developed in \cite{At} and \cite{SA}, we provide characterizations for completely hyperexpansive, 2-hyperexpansive, 2-isometric and alternatingly hyperexpansive weighted translation semigroups in terms of their symbols. In this paper, these four classes of operator semigroups are referred to as hyperexpansive weighted translation semigroups. In the process, we also obtain characterization of a contractive subnormal weighted translation semigroup in terms of a completely monotone function. 
Section 4 is devoted to the notion of Cauchy dual of a hyperexpansive weighted translation semigroup. We present a comparative analysis of weighted shift operators and weighted translation semigroups, especially in the light of Cauchy duals of corresponding classes and present new proofs of some known results.    
 
\section{Preliminaries}
Let $\mathbb R_+$ be the set of non-negative real numbers and let $L^2({\mathbb R_+})$ denote the Hilbert space of complex valued square integrable Lebesgue measurable functions on $\mathbb R_+.$ Let ${\cal B}(L^2)$ denote the algebra of bounded linear operators on $L^2({\mathbb R_+}).$ 
\begin{definition}
For a measurable, positive function $\varphi$ defined on $\mathbb R_+$ and $t\in \mathbb R_+,$ define the function $\varphi_t : \mathbb R_+ \rightarrow \mathbb R_+$ by 
\begin{equation*}
\varphi_t(x) =
\begin{cases}
\displaystyle \sqrt {\frac{\varphi(x)}{\varphi(x-t)}} & \text{if~ $x\geq t$},\\
0 & \text{if~ $x<t$}.
\end{cases}
\end{equation*}
Suppose that $\varphi_t$ is essentially bounded for every $t \in \mathbb R_+$.
For each fixed $t\in \mathbb R_+,$ we define $S_t$ on $L^2({\mathbb R_+})$ by  
\begin{equation*}
S_tf(x) =
\begin{cases}
\varphi_t(x)f(x-t) & \text{if~ $x\geq t$},\\
0 & \text{if~ $x<t$}.
\end{cases}
\end{equation*}
\end{definition}
\begin{remark}
Substituting $\varphi_t$ in the above definition, we get
\begin{equation*}
S_tf(x) =
\begin{cases}
\displaystyle \sqrt {\frac{\varphi(x)}{\varphi(x-t)}}f(x-t) & \text{if~ $x\geq t$},\\
0 & \text{if~ $x<t$}.
\end{cases}
\end{equation*}
It is easy to see that for every $t\in \mathbb R_+,~S_t$ is a bounded linear operator on $L^2({\mathbb R_+})$ with $\|S_t\|=\|\varphi_t\|_\infty,$  where $\|\varphi_t\|_\infty$ stands for the essential supremum of $\varphi_t$ given by \beqn \|\varphi_t\|_\infty= \inf  \{M \in \mathbb R: \varphi_t(x)\leq M ~\rm{almost~everywhere}\}. \eeqn
The family $\{S_t:t\in \mathbb R_+\}$ in ${\cal B}(L^2)$ is a semigroup with $S_0=I,$ the identity operator and for all $t,s~\in \mathbb R_+$, $S_t\circ S_s=S_{t+s}.$ 
\end{remark}

We say that $\varphi_t$ is a {\it weight function corresponding to the operator $S_t$}. Further, the semigroup $\{S_t:t\in \mathbb R_+\}$ is referred to as the {\it weighted translation semigroup with symbol $\varphi$}.
Throughout this article, we assume that the symbol $\varphi$ is a continuous function on $\mathbb R_+.$ 
We now turn our attention towards extending the association of special classes of functions and special types of operator semigroups $\{S_t\}.$ We now record the definitions of the special types of functions under consideration for the sake of completeness. Recall that a function $f$ is said to be of {\it class $C^n$} if it is differentiable $n$ times and the $n^{\rm{th}}$ derivative is continuous. A function $f$ is said to be of {\it class $C^\infty$} if it is differentiable infinitely many times.

A $C^\infty$ function $f:\mathbb R_+ \rightarrow \mathbb R$ is called  
\begin{enumerate} 
\item 
{\it completely monotone} if $$(-1)^kf^{(k)}(x)\geq 0, ~{\rm for~ all}~ k\geq 0,$$ where $f^{(k)}$ denotes the k$^{\rm {th}}$ derivative of $f.$
\item 
{\it completely alternating} if $$(-1)^{k-1} f^{(k)}(x)\geq 0,~~\mbox{for all}~ k\geq 1.$$
\item 
{\it absolutely monotone} if $$f^{(k)}(x)\geq 0, ~{\rm for~ all}~ k\geq 0.$$ 
\end{enumerate}

These classes of functions have been extensively studied in the literature \cite{BCR},\cite{SSV},\cite{W-1} and they get naturally associated with some special classes of operators. We find it convenient to record the definitions of the classes of operators under consideration for ready reference.  
Let $T$ be a bounded linear operator on a Hilbert space $H$ and $n$ be a positive integer. 
Let $B_n(T)$ denote the operator
\begin{equation} \label{B-n-T}
B_n(T)= \sum_{k=0}^n(-1)^k{n\choose k} {T^*}^kT^k,~ n \geq 1.
\end{equation}
An operator $T$ is said to be 
\begin{enumerate} 
\item  {\it subnormal} if there exist a Hilbert space $K$ containing $H$ and a normal operator $N\in\cal B(K)$ such that $N{H}\subseteq H$ and $N|_{H}=T.$
\item {\it completely hyperexpansive} if $B_n(T) \leq 0,~ \rm{for~all~integers~} n\geq 1.$
\item {\it $m$-hyperexpansion} if $B_n(T) \leq 0,~ \rm{for~all~integers~} n,~ 1\leq n\leq m.$
\item {\it alternatingly hyperexpansive} if $$\sum_{k=0}^n(-1)^{n-k}{n\choose k} {T^*}^kT^k\geq 0, ~\rm{for~all~integers~} n\geq 1.$$  
\item {\it $m$-isometry} if $B_m(T)= 0.$ 
\item {\it hyponormal} if $T^*T-TT^*\geq 0.$ 
\item {\it contraction (expansion)} if $I-T^*T\geq 0 ~~(I-T^*T\leq 0).$
\end{enumerate} 
For a detailed account on these classes of operators, the reader is referred to \cite{AS},\cite{At},\cite{Co},\cite{Jb1},\cite{Jb2},\cite{SA}. 

We now quote a characterization of subnormal contractions given by J. Agler \cite{Ag} which is used in the sequel. 
\begin{theorem} \label{p1} An operator $T$ on a Hilbert space $\cal H$ is a subnormal contraction if and only if  $B_n(T) \geq 0,~ \rm{for~all~integers~} n\geq 1.$ 
\end{theorem}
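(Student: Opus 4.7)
The plan is to prove the two directions separately, with the easy implication coming first and the serious work falling on the converse. For the forward direction, assume $T$ is a subnormal contraction with minimal normal extension $N \in \mathcal{B}(K)$. A theorem of Halmos guarantees that this minimal normal extension of a contraction is again a contraction, so $\sigma(N) \subseteq \overline{\mathbb{D}}$. For any $x \in \mathcal{H}$, using $T^k x = N^k x$ and the spectral theorem $N = \int_{\sigma(N)} \lambda \, dE(\lambda)$, I would compute
\begin{equation*}
\langle B_n(T)x, x\rangle = \sum_{k=0}^n (-1)^k \binom{n}{k}\|N^k x\|^2 = \int_{\sigma(N)} \sum_{k=0}^n (-1)^k \binom{n}{k} |\lambda|^{2k}\, d\mu_x(\lambda) = \int_{\sigma(N)} (1-|\lambda|^2)^n\, d\mu_x(\lambda),
\end{equation*}
where $\mu_x(\cdot) = \langle E(\cdot)x, x\rangle$ is the scalar spectral measure. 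Since $|\lambda| \leq 1$ on $\sigma(N)$, the integrand is non-negative, which gives $B_n(T) \geq 0$.

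For the converse, the idea is to convert the operator-theoretic hypothesis into a classical moment problem and then build the normal extension. Fix $x \in \mathcal{H}$ and set $a_n(x) := \|T^n x\|^2$. Observing that $\langle B_n(T)x,x\rangle = \sum_{k=0}^n (-1)^k \binom{n}{k} a_k(x)$, the assumption $B_n(T) \geq 0$ for every $n \geq 1$ is precisely the statement that the sequence $\{a_n(x)\}_{n \geq 0}$ has non-negative $n$-th finite differences, i.e.\ it is a completely monotone Hausdorff sequence. Since $B_1(T) \geq 0$ already yields $\|T\| \leq 1$, the sequence is bounded, so Hausdorff's moment theorem supplies a positive Borel measure $\mu_x$ on $[0,1]$ with $\|T^n x\|^2 = \int_0^1 t^n\, d\mu_x(t)$ for all $n \geq 0$.

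With this representation in hand, the next step is to run the standard dilation construction. Polarizing the scalar-valued moment formulas and using the positivity of all the $B_n(T)$ jointly on finite families of vectors, one obtains positive semi-definite kernels of the form $K_n(x,y) = \langle B_n(T)x, y\rangle$ that can be assembled into an inner product on an auxiliary pre-Hilbert space built from $\mathcal{H}$-valued polynomials. Completing and passing to the quotient by the null-space yields a Hilbert space $K \supseteq \mathcal{H}$ on which multiplication by the scalar variable gives a normal operator $N$ extending $T$; contractivity of $N$ is encoded in the fact that $\mu_x$ is supported in $[0,1]$.

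The main obstacle is precisely this last construction: translating the family of scalar moment representations into an honest normal dilation requires a careful verification that the candidate extension is well defined, densely defined, and genuinely normal, and it is here that the full strength of having $B_n(T) \geq 0$ for \emph{every} $n$ (rather than just the first few) is used. Once $N$ is produced, bounded by $1$, and shown to leave $\mathcal{H}$ invariant with $N|_\mathcal{H} = T$, the equivalence is complete.
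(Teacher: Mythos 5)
The paper does not prove this statement at all: it is quoted verbatim as Agler's theorem from \cite{Ag}, so there is no internal proof to compare against and your argument must stand on its own. Your forward direction does stand: the identity $\langle B_n(T)x,x\rangle=\sum_{k=0}^n(-1)^k\binom{n}{k}\|T^kx\|^2$, the fact that the minimal normal extension of a contraction has the same norm as $T$, and the computation $\sum_{k=0}^n(-1)^k\binom{n}{k}|\lambda|^{2k}=(1-|\lambda|^2)^n\geq 0$ on $\overline{\mathbb{D}}$ together give $B_n(T)\geq 0$ correctly and completely.

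The converse, however, has a genuine gap, and you have in effect flagged it yourself. Passing from $B_n(T)\geq 0$ to the statement that each scalar sequence $\|T^nx\|^2$ is a Hausdorff moment sequence is correct and is the easy half of the reduction; the entire difficulty of Agler's theorem lies in converting that family of scalar moment representations into a normal extension. Your description of this step --- ``positive semi-definite kernels $K_n(x,y)=\langle B_n(T)x,y\rangle$ \ldots assembled into an inner product on an auxiliary pre-Hilbert space built from $\mathcal{H}$-valued polynomials'' --- does not specify the space, the inner product, why the quotient is well defined, why multiplication by the variable is bounded and normal, or why it restricts to $T$ on $\mathcal{H}$; and you then state explicitly that ``the main obstacle is precisely this last construction.'' A sketch that names its central step as an unresolved obstacle is not a proof of that step. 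Two honest ways to close the gap: (i) invoke Lambert's criterion, that a bounded operator is subnormal if and only if $(\|T^nx\|^2)_{n\geq 0}$ is a Stieltjes moment sequence for every $x$ --- your Hausdorff representation on $[0,1]$ then yields subnormality at once, and $B_1(T)\geq 0$ gives contractivity; or (ii) actually carry out Agler's hereditary functional calculus / model-theoretic construction, which is substantially more involved than the GNS-style outline you give and is not a routine polarization of the scalar formulas. As written, the ``if'' direction is asserted rather than proved.
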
 

Among the classes of operators as defined above, hyponormal and subnormal weighted translation semigroups have been studied by M. Embry and \\A. Lambert. The characterizations of these semigroups in terms of their symbols as given there, are recorded here (\cite[Lemma 3.3]{EL1},  \cite[Theorem 2.2]{EL2}). 
\begin{theorem} \label{p2} The semigroup $\{S_t\}$ with a continuous, positive symbol $\varphi$ is hyponormal if and only if $\log \varphi$ is convex.\end{theorem}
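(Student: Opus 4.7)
The plan is to reduce hyponormality of each $S_t$ to a pointwise inequality on $\varphi$, and then recognise that inequality as midpoint convexity of $\log\varphi$, which upgrades to convexity by the standing continuity assumption.

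First I would compute the adjoint. A direct change of variables in $\langle S_t f,g\rangle=\int_t^\infty\sqrt{\varphi(x)/\varphi(x-t)}\,f(x-t)\overline{g(x)}\,dx$, via the substitution $u=x-t$, yields
\[
S_t^*g(x)=\sqrt{\frac{\varphi(x+t)}{\varphi(x)}}\,g(x+t),\qquad x\ge 0.
\]
Composing gives that $S_t^*S_t$ and $S_tS_t^*$ are both multiplication operators: for all $x\ge 0$,
\[
S_t^*S_t f(x)=\frac{\varphi(x+t)}{\varphi(x)}\,f(x),
\]
while $S_tS_t^*f(x)=\frac{\varphi(x)}{\varphi(x-t)}f(x)$ for $x\ge t$ and $S_tS_t^*f(x)=0$ for $x<t$.

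Next I would read off the hyponormality condition. The operator $S_t^*S_t-S_tS_t^*$ is multiplication by a function that is automatically non-negative on $[0,t)$, and on $[t,\infty)$ equals $\varphi(x+t)/\varphi(x)-\varphi(x)/\varphi(x-t)$. Since $\varphi$ is continuous and strictly positive, hyponormality of $S_t$ is therefore equivalent to
\[
\varphi(x+t)\,\varphi(x-t)\ge \varphi(x)^{2}\qquad\text{for every }x\ge t.
\]
Taking logarithms and setting $y=x-t$, $z=x+t$, this is exactly
\[
\log\varphi\!\left(\tfrac{y+z}{2}\right)\le \tfrac12\bigl(\log\varphi(y)+\log\varphi(z)\bigr),
\]
i.e.\ midpoint (Jensen) convexity of $\log\varphi$.

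Finally I would close the loop between hyponormality of the semigroup and convexity of $\log\varphi$. The family $\{S_t\}$ being hyponormal means each $S_t$ is hyponormal, which by the preceding step is the midpoint inequality for all pairs $y,z\in\mathbb R_+$. Since $\log\varphi$ is continuous (as $\varphi$ is continuous and positive), midpoint convexity is equivalent to ordinary convexity; conversely, if $\log\varphi$ is convex then the midpoint inequality holds for every pair, hence $S_t$ is hyponormal for every $t\in\mathbb R_+$. I do not expect a serious obstacle: the only delicate point is getting the adjoint formula correctly and handling the $x<t$ piece of $S_tS_t^*$, but both are routine once the change of variables is written out.
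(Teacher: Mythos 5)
Your argument is correct and is exactly the route the paper has in mind: the paper quotes this result from Embry--Lambert without proof, but its remark immediately after the statement (hyponormality forces midpoint convexity of $\log\varphi$, which upgrades to convexity under continuity, or even mere measurability via Sierpinski) confirms that the intended proof is precisely your reduction of $S_t^*S_t-S_tS_t^*\ge 0$ to the pointwise inequality $\varphi(x+t)\varphi(x-t)\ge\varphi(x)^2$. Your adjoint computation and the handling of the $x<t$ piece are both correct, so there is nothing to fix.
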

The assumption of continuity of the function $\varphi$ in the above theorem is superfluous. It turns out that, if the semigroup $\{S_t\}$ is hyponormal, then the function $\log \varphi$ is mid-point convex. Now by a result of Sierpinski \cite{Si}, a measurable and mid-point convex function turns out to be continuous.     
\begin{theorem} \label{p3} The semigroup $\{S_t\}$ with a continuous symbol $\varphi$ is subnormal if and only if $$ \varphi (x)=\int_0^as^xd\rho (s),$$ where $\displaystyle a=\lim_{t\rightarrow \infty}||S_t^*S_t||^{\frac{1}{t}}$ and $\rho$ is a probability measure on $[0,a].$   \end{theorem}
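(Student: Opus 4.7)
The plan centers on the elementary pointwise formula for $S_t^{*k}S_t^k$ combined with a moment-problem identification on the scalar side. First I would record the key calculation: a direct adjoint computation from the definition yields $(S_t^* g)(y) = \sqrt{\varphi(y+t)/\varphi(y)}\, g(y+t)$, so $(S_t^*S_t f)(y) = (\varphi(y+t)/\varphi(y))\,f(y)$, and the semigroup identity $S_t^k = S_{kt}$ gives
\begin{equation*}
(S_t^{*k}S_t^k f)(y) = \frac{\varphi(y+kt)}{\varphi(y)}\,f(y).
\end{equation*}
In particular $\langle B_n(S_t) f, f\rangle = \int_0^\infty \bigl[\sum_{k=0}^n (-1)^k \binom{n}{k}\varphi(y+kt)/\varphi(y)\bigr]\,|f(y)|^2\,dy$, which turns operator positivity into pointwise positivity on $\varphi$.

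For the sufficiency direction, assume $\varphi(y)=\int_0^a s^y\,d\rho(s)$. A short estimate gives $\|S_t\|\le a^{t/2}$, since $\varphi(x)/\varphi(x-t)$ is a weighted average of $s^t$ against the probability measure $(s^{x-t}/\varphi(x-t))\,d\rho(s)$; rescaling $\tilde S_t := a^{-t/2}S_t$ (which corresponds to replacing $\rho$ by its push-forward under $s\mapsto s/a$) reduces the problem to $a\le 1$, in which case each $S_t$ is a contraction. For fixed $t$ and $y$, the integral representation gives
\begin{equation*}
\sum_{k=0}^n (-1)^k \binom{n}{k}\frac{\varphi(y+kt)}{\varphi(y)} = \int_0^a (1-s^t)^n\,dm_y(s),
\end{equation*}
with $dm_y(s):=(s^y/\varphi(y))\,d\rho(s)$ a probability measure on $[0,a]$. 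Since $(1-s^t)^n \ge 0$ on $[0,1]$, we conclude $B_n(S_t)\ge 0$ for all $n$, and Agler's theorem (Theorem \ref{p1}) yields that each $S_t$ is a subnormal contraction. A joint normal extension for the whole semigroup is then assembled by a direct-integral construction over $[0,a]$ driven by $\rho$, in the spirit of the Berger--Stampfli extension for subnormal weighted shifts.

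For the necessity direction, suppose $\{S_t\}$ admits a joint normal extension $\{N_t\}$ on a larger Hilbert space $K$. Writing $N_t=e^{tA}$ for a normal generator $A$ and applying the spectral theorem, the commuting family $\{N_t^*N_t\}$ has a spectral resolution $N_t^*N_t = \int_0^a s^t\,dE(s)$ for a positive operator-valued measure $E$ on $[0,a]$, with the support bound coming from $\|S_t^*S_t\|^{1/t}\to a$. Because $S_t^*S_t = P_H N_t^*N_t|_H$ acts as multiplication on $L^2(\mathbb R_+)$ for every $t$, the compression $B\mapsto P_H E(B)|_H$ acts diagonally in the $L^2$-representation (spectral projections of $N_t^*N_t$ are weak limits of polynomials in the $N_t^*N_t$'s, whose compressions are multiplication operators). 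This produces, for each $y\in\mathbb R_+$, a probability measure $m_y$ on $[0,a]$ with
\begin{equation*}
\frac{\varphi(y+t)}{\varphi(y)}=\int_0^a s^t\,dm_y(s),
\end{equation*}
and setting $y=0$ and $\rho:=\varphi(0)\,m_0$ delivers $\varphi(t)=\int_0^a s^t\,d\rho(s)$, with the probability normalization matching the convention $\varphi(0)=1$. The main obstacle is the diagonalization step: one must carefully justify that $E$, which is a priori only defined on the enlarged space $K$, produces a genuine family of scalar measures $\{m_y\}$ indexed pointwise by $y\in\mathbb R_+$; the continuity of $\varphi$ is essential here to pass from almost-everywhere identities to pointwise ones, and the bookkeeping relating the support bound $a$ to the asymptotic growth $\|S_t^*S_t\|^{1/t}\to a$ requires an additional spectral-radius estimate on the generator of $\{N_t\}$.
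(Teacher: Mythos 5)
First, note that the paper does not prove Theorem \ref{p3}: it is quoted from Embry and Lambert \cite{EL2}, so there is no in-paper proof to compare against. The closest the paper comes is the contractive case, which it rederives via complete monotonicity (Corollary \ref{p6}(4) and Proposition \ref{p35}(1)). Your sufficiency half is sound and consistent with that machinery: the identity $\sum_{k=0}^n(-1)^k\binom{n}{k}\varphi(y+kt)=\int_0^a s^y(1-s^t)^n\,d\rho(s)$, the bound $\|S_t\|\le a^{t/2}$, the reduction to $a\le 1$ by passing to the symbol $\varphi(x)/a^x$, and Agler's criterion (Theorem \ref{p1}) together show that each $S_t$ is subnormal. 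The closing ``direct-integral construction'' of a joint normal extension is not needed, since the paper's definition of a subnormal semigroup only requires each $S_t$ to be subnormal individually.

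The necessity direction, however, has a genuine gap. You start from a \emph{joint} normal extension $\{N_t\}$ carrying a single spectral measure with $N_t^*N_t=\int_0^a s^t\,dE(s)$ for all $t$ simultaneously. The hypothesis only provides that each $S_t$ is individually subnormal, and commuting subnormal operators need not admit commuting normal extensions (the Abrahamse--Lubin examples), so neither the joint extension nor the representation $N_t=e^{tA}$ with a normal generator is available without argument; this is precisely the hard content you are assuming away. The workable route --- essentially Embry and Lambert's --- fixes one $t$ at a time: subnormality of the single operator $S_t$ yields, for almost every $y$, a representation $\varphi(y+kt)/\varphi(y)=\int_0^{\,a_t}\lambda^k\,dm_{y,t}(\lambda)$ of the \emph{discrete} sequence $k\mapsto\varphi(y+kt)/\varphi(y)$ as a Hausdorff--Stieltjes moment sequence. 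One must then glue these representations over all $t>0$ into a single measure $\rho$ with $\varphi(x)=\int_0^a s^x\,d\rho(s)$; this passage from discrete to continuous moment representations, which uses the continuity of $\varphi$ and a limiting argument as $t\to 0$ (together with compactness of the measures $m_{y,t}$ coming from the norm bound defining $a$), is the substance of the proof and is entirely absent from your sketch. Your final sentence flags the diagonalization of the compressed spectral measure as the main obstacle --- that step can in fact be handled, since compressions of polynomials in $N_t^*N_t$ are the multiplication operators $S_{kt}^*S_{kt}$ and the multiplication algebra is weakly closed --- but the earlier obstacle, the existence of the joint extension itself, is the one that actually blocks the argument.
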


\section{Characterizations of hyperexpansive weighted translation semigroups}
In this section, we continue the theme of characterizing some classes of weighted translation semigroups in terms of their symbols. We say that a semigroup $\{S_t\}$ is {\it 2-hyperexpansive}, {\it $m$-isometry}, {\it completely hyperexpansive}, {\it subnormal}, {\it hyponormal}, {\it alternatingly hyperexpansive} if each operator $S_t$ respectively belongs to that class. 
Consider a weighted translation semigroup $\{S_t\}$ with symbol $\varphi.$
Observe that the adjoint of $S_t$ is given by 
$$S_t^*f(x) = \sqrt {\frac{\varphi(x+t)}{\varphi(x)}}f(x+t) \quad \mbox{for all}~ x\geq 0.$$
Further it is easy to see that,
 $$ (S_t^*S_t)f(x)=\frac{\varphi(x+t)}{\varphi(x)}f(x) \quad \mbox{for all}~ x\geq 0.$$

\begin{theorem} \label{p4} 
Let $\{S_t\}$ be a weighted translation semigroup with symbol $\varphi.$ Let $n$ be a fixed positive integer and $\varphi \in C^n.$ Then the following statements are equivalent:
\begin{enumerate}
\item $B_n(S_t) \geq 0 ~\rm{for~ all}~ t\in \mathbb R_+.$
   
\item $\displaystyle \sum_{k=0}^n(-1)^k{n\choose k} \varphi(x+kt)\geq 0 ~{\rm for~all~} x,t\in \mathbb R_+.$

\item $(-1)^n\varphi^{(n)}(x) \geq 0 ~{\rm for~all~} x\in \mathbb R_+.$

\end{enumerate} \end{theorem}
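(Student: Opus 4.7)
The plan is to verify each implication by translating the operator condition (1) into a pointwise scalar inequality, and then reducing that scalar inequality to the sign of the $n$-th derivative via classical finite-difference calculus.

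For (1) $\Leftrightarrow$ (2), I would first exploit the semigroup identity $S_t^k = S_{kt}$, which gives $(S_t^*)^k S_t^k = S_{kt}^* S_{kt}$. Combined with the formula $(S_s^* S_s)f(x) = \frac{\varphi(x+s)}{\varphi(x)} f(x)$ recorded just before the theorem, this yields
$$\big((S_t^*)^k S_t^k f\big)(x) = \frac{\varphi(x+kt)}{\varphi(x)} f(x),$$
so $B_n(S_t)$ is the multiplication operator with symbol
$$\frac{1}{\varphi(x)} \sum_{k=0}^n (-1)^k \binom{n}{k} \varphi(x+kt).$$
Positivity of a multiplication operator on $L^2(\mathbb R_+)$ is equivalent to almost-everywhere non-negativity of its symbol; since $\varphi$ is positive and continuous, the factor $1/\varphi(x)$ may be dropped and the almost-everywhere condition upgrades to the pointwise inequality in (2).

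For (2) $\Leftrightarrow$ (3), I would recognize the sum in (2) as $(-1)^n \Delta_t^n \varphi(x)$, where $\Delta_t^n$ denotes the $n$-th forward difference with step $t$, using the identity $(-1)^k = (-1)^n(-1)^{n-k}$. For $\varphi \in C^n$, the standard mean value theorem for finite differences supplies a point $\xi \in (x, x+nt)$ with $\Delta_t^n \varphi(x) = t^n \varphi^{(n)}(\xi)$, which makes (3) $\Rightarrow$ (2) immediate. For the converse, I would divide the inequality in (2) by $t^n > 0$ and let $t \to 0^+$; the limit equals $(-1)^n \varphi^{(n)}(x)$, delivering (3). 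The whole argument is essentially a direct computation plus two standard real-analysis facts, and I do not foresee a serious obstacle: the continuity of $\varphi$ is used only to pass from almost-everywhere to everywhere, while the $C^n$ assumption is used only to apply and to invert the mean value formula.
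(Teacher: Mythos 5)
Your proposal is correct and follows essentially the same route as the paper: reduce $B_n(S_t)\geq 0$ to pointwise non-negativity of the multiplier $\sum_{k}(-1)^k\binom{n}{k}\varphi(x+kt)$ using $S_t^k=S_{kt}$ and the positivity and continuity of $\varphi$, then pass between (2) and (3) via the mean value theorem for $n$-th finite differences in one direction and the limit $(-1)^n\varphi^{(n)}(x)=\lim_{t\to 0^+}t^{-n}\sum_k(-1)^k\binom{n}{k}\varphi(x+kt)$ in the other. No gaps; the only cosmetic difference is that you phrase the first equivalence in terms of multiplication operators while the paper writes out the quadratic form $\langle B_n(S_t)f,f\rangle$ explicitly.
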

\begin{proof} We first prove $(1)\Leftrightarrow (2).$ Note that for each $t\in \mathbb R_+,~ B_n(S_t)\geq 0 $
if and only if 
$\langle B_n(S_t)f,f \rangle \geq 0 ~{\rm for~all~} f\in L^2({\mathbb R_+}),~{\rm for~all~} t\in \mathbb R_+.$
This is true if and only if $$\int_0^\infty \left(\sum_{k=0}^n(-1)^k{n\choose k} \varphi(x+kt)\right)|f(x)|^2 dx\geq 0 ~{\rm for~all~} f\in L^2({\mathbb R_+}),~{\rm for~all~} x,t\in \mathbb R_+.$$
Now by virtue of continuity of $\varphi,$ this statement is equivalent to
$$\sum_{k=0}^n(-1)^k{n\choose k} \varphi(x+kt)\geq 0 ~{\rm for~all~} x,t\in \mathbb R_+.$$

We now prove $(2)\Rightarrow (3).$
Since $\varphi \in C^n,$ we have 
$$(-1)^n\varphi ^{(n)}(x)=\lim_{h\rightarrow 0}\frac{\displaystyle \sum_{k=0}^n(-1)^k{n\choose k} \varphi(x+kh)}{h^n}.$$
Hence $$\sum_{k=0}^n(-1)^k{n\choose k} \varphi(x+kt)\geq 0 ~{\rm for~all~} x,t\in \mathbb R_+,$$
implies that
$(-1)^n\varphi^{(n)}(x) \geq 0~{\rm for~all~} x\in \mathbb R_+.$
The proof of $(3)\Rightarrow (2)$ follows from the fact that the repeated application of Mean Value Theorem gives $$\sum_{k=0}^n(-1)^k{n\choose k} \varphi(x+kt)=(-1)^nt^n\varphi^{(n)}(x^\prime),$$ where $x^\prime \in (x,x+nt)$ (cf. \cite[Theorem 4.8]{SSV}). 
\end{proof}

The proof of the following theorem is similar to the above.
\begin{theorem} \label{p30} 
Let $\{S_t\}$ be a weighted translation semigroup with symbol $\varphi.$ Let $n$ be a fixed positive integer and $\varphi \in C^n.$ Then the following statements are equivalent:
\begin{enumerate}
\item $B_n(S_t) \leq 0~\rm{for~ all}~ t\in \mathbb R_+.$
   
\item $\displaystyle \sum_{k=0}^n(-1)^k{n\choose k} \varphi(x+kt)\leq 0 ~{\rm for~all~} x,t\in \mathbb R_+.$

\item $(-1)^n\varphi^{(n)}(x) \leq 0~{\rm for~all~} x\in \mathbb R_+.$

\end{enumerate} \end{theorem}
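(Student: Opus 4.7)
The plan is to mirror the argument used for Theorem \ref{p4}, systematically reversing inequality signs. The backbone is the identity
$$\langle B_n(S_t)f, f\rangle = \int_0^\infty \left(\sum_{k=0}^n (-1)^k \binom{n}{k} \varphi(x+kt)\right) \frac{|f(x)|^2}{\varphi(x)}\, dx,$$
which follows from the semigroup property $S_t^k = S_{kt}$ together with the formula $(S_s^* S_s)f(x) = \frac{\varphi(x+s)}{\varphi(x)} f(x)$ noted before Theorem \ref{p4}. Because $\varphi > 0$, the sign of the weighted integrand is controlled by the sign of the bracketed sum.

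For $(1) \Leftrightarrow (2)$, I would argue that $B_n(S_t) \leq 0$ for all $t$ is equivalent to the above integral being $\leq 0$ for every $f \in L^2(\mathbb R_+)$ and every $t \in \mathbb R_+$. Continuity of $\varphi$ allows the usual localization argument (testing against $|f|^2$ concentrated in shrinking neighborhoods of a point) to promote the integral inequality to the pointwise inequality $\sum_{k=0}^n (-1)^k \binom{n}{k} \varphi(x+kt) \leq 0$, which is statement $(2)$.

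For $(2) \Rightarrow (3)$, I would invoke the limit representation
$$(-1)^n \varphi^{(n)}(x) = \lim_{h \to 0^+} \frac{\displaystyle\sum_{k=0}^n (-1)^k \binom{n}{k} \varphi(x+kh)}{h^n},$$
available because $\varphi \in C^n$. Since the numerator is $\leq 0$ by hypothesis and $h^n > 0$, the limit inherits the sign. For the reverse implication $(3) \Rightarrow (2)$, I would apply the iterated Mean Value Theorem (as cited in the proof of Theorem \ref{p4}, cf.\ \cite[Theorem 4.8]{SSV}) to obtain
$$\sum_{k=0}^n (-1)^k \binom{n}{k} \varphi(x+kt) = (-1)^n t^n \varphi^{(n)}(x^\prime)$$
for some $x^\prime \in (x, x+nt)$; the hypothesis $(-1)^n \varphi^{(n)} \leq 0$ then yields $(2)$ directly.

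I do not expect any genuine obstacle here: each step of the argument for Theorem \ref{p4} is linear in the bracketed sum (or uses an identity that does not involve the sign of $\varphi$), so flipping inequality directions throughout preserves the logical structure. The only vigilance required is bookkeeping of signs, particularly ensuring that the positive factors $h^n$, $t^n$, and $\varphi(x)$ that appear are always treated as positive so that they do not reverse any inequality.
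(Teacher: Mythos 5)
Your proposal is correct and follows exactly the route the paper intends: the paper proves Theorem \ref{p30} by declaring it ``similar to'' the proof of Theorem \ref{p4}, and your argument is precisely that proof with all inequalities reversed, using the same quadratic-form computation, the same localization via continuity, the same finite-difference limit for $(2)\Rightarrow(3)$, and the same iterated Mean Value Theorem for $(3)\Rightarrow(2)$. (Your inclusion of the factor $\frac{1}{\varphi(x)}$ in the integrand is in fact the more accurate bookkeeping, and is harmless since $\varphi>0$.)
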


The following corollary now follows at once from Theorem \ref{p4} and \ref{p30}.
\begin{corollary} \label{p6} Let $\{S_t\}$ be a weighted translation semigroup with symbol $\varphi$ and $\varphi \in C^\infty.$ Then
\begin{enumerate}
\item The semigroup $\{S_t\}$ is 2-hyperexpansive if and only if $\varphi$ is a concave function.
\item The semigroup $\{S_t\}$ is $m$-isometry if and only if $\varphi$ is a polynomial of degree $m-1.$
\item The semigroup $\{S_t\}$ is completely hyperexpansive if and only if $\varphi$ is a completely alternating function.
\item The semigroup $\{S_t\}$ is subnormal contraction if and only if $\varphi$ is a completely monotone function.
\item The semigroup $\{S_t\}$ is alternatingly hyperexpansive if and only if $\varphi$ is an absolutely monotone function.
\end{enumerate} \end{corollary}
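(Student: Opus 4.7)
The plan is to derive each of the five equivalences by specializing Theorems \ref{p4} and \ref{p30} to the defining inequality of the operator class in question, and then matching the resulting sign condition on the derivatives of $\varphi$ with the named function-theoretic property.

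For part (3), complete hyperexpansivity is the requirement $B_n(S_t)\le 0$ for every $n\ge 1$ and every $t\in\mathbb R_+$; Theorem \ref{p30} converts this into $(-1)^n\varphi^{(n)}(x)\le 0$, equivalently $(-1)^{n-1}\varphi^{(n)}(x)\ge 0$ for all $n\ge 1$, which is by definition complete alternation. For part (5), alternating hyperexpansivity amounts to $(-1)^n B_n(S_t)\ge 0$, so $B_n(S_t)\ge 0$ when $n$ is even and $B_n(S_t)\le 0$ when $n$ is odd; applying Theorem \ref{p4} in the even case and Theorem \ref{p30} in the odd case collapses both to $\varphi^{(n)}(x)\ge 0$ for every $n\ge 1$, and combined with the standing positivity of $\varphi$ this is absolute monotonicity.

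Part (4) requires one extra ingredient: Agler's Theorem \ref{p1}, which lets me rephrase ``each $S_t$ is a subnormal contraction'' as $B_n(S_t)\ge 0$ for all $n\ge 1$ and all $t$. Theorem \ref{p4} then gives $(-1)^n\varphi^{(n)}\ge 0$ for $n\ge 1$, which together with $\varphi\ge 0$ is exactly complete monotonicity. Part (2) is analogous: $S_t$ being an $m$-isometry is the conjunction $B_m(S_t)\ge 0$ and $B_m(S_t)\le 0$, so Theorems \ref{p4} and \ref{p30} together force $(-1)^m\varphi^{(m)}\equiv 0$, which says precisely that $\varphi$ is a polynomial of degree at most $m-1$, matching the stated conclusion.

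Part (1) is the step that is slightly less automatic. Two-hyperexpansivity means $B_1(S_t)\le 0$ and $B_2(S_t)\le 0$ for every $t$; Theorem \ref{p30} immediately yields $\varphi'\ge 0$ and $\varphi''\le 0$, that is, $\varphi$ is non-decreasing and concave. In the reverse direction the hypothesis names only concavity, so the main (mild) obstacle is to promote concavity back to monotonicity for free: if a concave $C^1$ function on $\mathbb R_+$ were strictly decreasing at some $x_0$, the tangent-line inequality $\varphi(x)\le \varphi(x_0)+\varphi'(x_0)(x-x_0)$ would drive $\varphi(x)\to -\infty$, contradicting the standing positivity of $\varphi$. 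Hence on $\mathbb R_+$ concavity plus positivity already implies $\varphi'\ge 0$, and the equivalence in (1) follows.
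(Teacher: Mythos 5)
Your proof is correct and follows the same route the paper intends: the corollary is stated there as an immediate consequence of Theorems \ref{p4} and \ref{p30} (together with Agler's Theorem \ref{p1} for part (4)), which is exactly the specialization you carry out. The only point where you add substance is in part (1), where you supply the tangent-line argument showing that a positive concave function on $\mathbb{R}_+$ is automatically non-decreasing; the paper defers this to a citation of \cite[Lemma 3.16]{CS}, so your version is, if anything, more self-contained.
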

\begin{remark}
Note that if the semigroup $\{S_t\}$ is 2-hyperexpansive, then the symbol $\varphi$ is mid-point concave. As in case of convexity, it turns out that a measurable, mid-point concave function is continuous and hence concave. Thus the statement (1) in Corollary \ref{p6} can be proved only under the assumption of measurability of $\varphi.$ Also the statement (2) in Corollary \ref{p6} can be proved under weaker condition that $\varphi \in C^m.$   
\end{remark}
 
We now give the characterizations of contractive subnormal and completely hyperexpansive semigroups $\{S_t\},$ in terms of integral representations of their symbol $\varphi$ by appealing to the integral representations of completely monotone and completely alternating maps as given in \cite[Theorem 1.4, Theorem 3.2]{SSV}. 
\begin{proposition}  \label{p35} 
Let $\{S_t\}$ be a weighted translation semigroup with symbol $\varphi$ and $\varphi \in C^\infty.$ Then 
\begin{enumerate}
\item The semigroup $\{S_t\}$ is a subnormal contraction if and only if $$\varphi(x)=\int_0^\infty e^{-xa}d\mu (a),$$ where $\mu$ is a finite non-negative Borel measure on $[0,\infty).$ 
\item The semigroup $\{S_t\}$ is completely hyperexpansive if and only if $$\varphi(x)=\varphi (0)+cx+\int_0^\infty (1-e^{-ax})d\mu (a),$$ where $c$ is a non-negative real number and $\mu$ is a measure on $[0,\infty)$ satisfying $\displaystyle \int_0^\infty (1\wedge a)d\mu (a) < \infty.$ 
\end{enumerate}
\end{proposition}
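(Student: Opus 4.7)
The plan is to reduce the proposition to Corollary \ref{p6} and then invoke the classical integral representations of completely monotone and completely alternating functions. Since Corollary \ref{p6}(4) already identifies the contractive subnormality of $\{S_t\}$ with complete monotonicity of $\varphi$, and Corollary \ref{p6}(3) identifies complete hyperexpansivity of $\{S_t\}$ with complete alternation of $\varphi$, the entire proof reduces to translating these pointwise monotonicity conditions into integral form using the results cited from \cite[Theorem 1.4, Theorem 3.2]{SSV}.

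For part (1), I would first apply Corollary \ref{p6}(4) to replace ``subnormal contraction'' with ``$\varphi$ completely monotone.'' Then I would invoke Bernstein's theorem (the SSV reference 1.4), which states that a function on $[0,\infty)$ is completely monotone if and only if it is the Laplace transform of a non-negative Borel measure $\mu$ on $[0,\infty)$. To see that $\mu$ is finite, evaluate the representation at $x=0$ to get $\mu([0,\infty)) = \varphi(0) < \infty$, using that $\varphi$ is continuous (hence finite) at the origin. Conversely, given any such representation, differentiation under the integral sign produces $(-1)^k\varphi^{(k)}(x) = \int_0^\infty a^k e^{-xa} d\mu(a) \geq 0$, so that $\varphi$ is completely monotone, and Corollary \ref{p6}(4) returns the conclusion.

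For part (2), I would first apply Corollary \ref{p6}(3) to replace ``completely hyperexpansive'' with ``$\varphi$ completely alternating.'' Then the Lévy--Khintchine type representation for completely alternating functions (Bernstein functions), given in \cite[Theorem 3.2]{SSV}, yields immediately the stated formula
\begin{equation*}
\varphi(x) = \varphi(0) + cx + \int_0^\infty (1 - e^{-ax}) d\mu(a),
\end{equation*}
where $c \geq 0$ and $\int_0^\infty (1 \wedge a) d\mu(a) < \infty$; the integrability condition $\int (1\wedge a) d\mu < \infty$ is precisely what guarantees convergence of the integral $\int(1-e^{-ax}) d\mu(a)$ for each $x \geq 0$. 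Conversely, any function of this form is easily checked to satisfy $(-1)^{k-1}\varphi^{(k)}(x) \geq 0$ for $k\geq 1$ by differentiating under the integral (the linear term contributes only to the first derivative), and Corollary \ref{p6}(3) closes the loop.

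The substantive work has already been absorbed into Theorem \ref{p4} and Corollary \ref{p6}, so there is no real obstacle left: the only point requiring care is bookkeeping between the paper's definitions of completely monotone and completely alternating and the normalizations used in \cite{SSV}, together with the finiteness of $\mu$ in part (1) and the correct integrability condition in part (2).
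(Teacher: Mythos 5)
Your proposal is correct and follows exactly the route the paper intends: reduce via Corollary \ref{p6}(3),(4) to complete monotonicity/alternation of $\varphi$ and then quote the integral representations from \cite[Theorem 1.4, Theorem 3.2]{SSV}. The paper in fact leaves the proof at this level of citation, so your additional checks (finiteness of $\mu$ via $x=0$, differentiation under the integral for the converses) only make the argument more explicit.
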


\begin{remark} 
We now observe that if $\varphi$ is either completely monotone or completely alternating function, then the weight function $\varphi_t$ is essentially bounded. 
\begin{enumerate}
\item If the function $\varphi$ is completely monotone, then $\varphi ^\prime \leq 0.$ Therefore $\varphi$ is a decreasing function. Hence $\varphi_t \leq 1$ for all $t\in \mathbb R_+.$
\item If the function $\varphi$ is completely alternating, then $\varphi ^\prime \geq 0$ and therefore $\varphi$ is an increasing function. Hence $\varphi_t(x) \geq 1$ for $x\geq t.$ We now prove that for every fixed $t\in \mathbb R_+,\displaystyle \lim_{x\rightarrow \infty} \varphi_t (x)=1.$
It is sufficient to prove that $$\lim_{x\rightarrow \infty} \varphi_t^2 (x)=\lim_{x\rightarrow \infty} \frac{\varphi (x)}{\varphi (x-t)}=1.$$
Since $\varphi$ is a completely alternating function, by Proposition \ref{p35}(2), we have $$ \varphi (x)=\varphi (0)+cx+\int_0^\infty (1-e^{-ax})d\mu (a),$$ where $c$ is a non-negative real number and $\mu$ is a measure on $[0,\infty)$ satisfying $\displaystyle \int_0^\infty (1\wedge a)d\mu (a) < \infty.$
Now, 
\beqn 
\frac{\varphi (x)}{\varphi (x-t)}-1 
& = & \frac{\varphi (x)-\varphi (x-t)}{\varphi (x-t)} \\
& = & \frac{ct+\int_0^\infty [e^{-a(x-t)}-e^{-ax}]d\mu (a)}{\varphi (0)+c(x-t)+\int_0^\infty (1-e^{-a(x-t)})d\mu (a)}. \eeqn
Note that $F(x)=e^{-a(x-t)}-e^{-ax}$ is a decreasing function and $F(x)$ tends to zero as $x\rightarrow \infty.$
By Lebesgue monotone convergence theorem, the integral $\int_0^\infty [e^{-a(x-t)}-e^{-ax}]d\mu (a)$ converges to zero. Hence $\displaystyle \lim_{x\rightarrow \infty} \varphi_t (x)=1.$
\end{enumerate}
\end{remark}

As noted in Corollary \ref{p6}(1), the symbol for a 2-hyperexpansive weighted translation semigroup is concave.
 Now in view of \cite[Lemma 3.16]{CS} a non-negative concave $C^2$ function is increasing. Thus if the symbol $\varphi $ for a 2-hyperexpansive weighted translation semigroup is $C^2,$ then it is an increasing function. Recall that if a weighted shift operator is 2-hyperexpansive, then the corresponding weight sequence is decreasing \cite[Proposition 4]{At}. However, the following example indicates that the weight function $\varphi_t$ associated to 2-hyperexpansive operator $S_t$ need not be monotonic. 
\begin{example}
Let $\varphi (x)=\sqrt{x+1}.$ In this case, the function $\varphi$ is concave and therefore by Corollary \ref{p6} (1), the corresponding semigroup $\{S_t\}$ is 2-hyperexpansive. We now show that $\varphi_1,$ the weight function corresponding to $S_1,$ is neither increasing nor decreasing. 
\begin{equation*}
\varphi_1(x) =
\begin{cases}
\displaystyle \frac{\sqrt[4]{x+1}}{\sqrt[4]{x}} & \text{if~ $x\geq 1$}\\
0 & \text{if~ $x<1$}.
\end{cases}
\end{equation*}

For $x=2, y=3,$ we have $\varphi_1(x)=\sqrt[4]{\frac{3}{2}} > \varphi_1(y)=\sqrt[4]{\frac{4}{3}}$ and if $x=\frac{1}{2}, y=2,$ then $\varphi_1(x)=0 < \varphi_1(y)=\sqrt[4]{\frac{3}{2}}.$ Thus the weight function $\varphi_1$ is neither increasing nor decreasing.
\eop
\end{example}

We now illustrate the theme developed in Corollary \ref{p6} to construct some special types of weighted translation semigroups by choosing some special types of symbols. 
\begin{example} Let $\varphi$ be the symbol for the semigroup $\{S_t\}.$ In the following examples, the weight function $\varphi_t$ is essentially bounded for every $t \in \mathbb R_+$.
\begin{enumerate}
\item
Let $\varphi(x)=\sqrt{x+1},$  
\begin{equation*}
\varphi(x) =
\begin{cases}
x+1 & \text{if~ $0\leq x\leq 1$},\\
2 & \text{if~ $x>1$}.
\end{cases}
\end{equation*}
These functions are concave. Thus by Corollary \ref{p6} (1), the semigroups $\{S_t\}$ corresponding to these symbols $\varphi$ are 2-hyperexpansive.
\item
If $\varphi$ is a polynomial of degree $m-1,$ then by Corollary \ref{p6} (2) the corresponding semigroup $\{S_t\}$ is an $m$-isometry. 
\item
Let $\varphi(x)= \log (x+2), \varphi(x)=2-e^{-x}, \varphi(x)=x+1.$
These functions are completely alternating. Thus by Corollary \ref{p6} (3), the semigroups $\{S_t\}$ corresponding to these symbols $\varphi$ are completely hyperexpansive.
\item
Let $\varphi(x)=\frac{1}{x+1},~ \varphi(x)=e^{-x}.$ These functions are completely monotone. Thus by Corollary \ref{p6} (4), the semigroups $\{S_t\}$ corresponding to these symbols $\varphi$ are  subnormal contractions.
\item
Let $\displaystyle \varphi(x)=\sum_{k=0}^n a_kx^k,~a_0,a_1,\cdots ,a_n>0.$
This function is absolutely monotone. Thus by Corollary \ref{p6} (5), the semigroup $\{S_t\}$ corresponding to the symbol $\varphi$ is alternatingly hyperexpansive.
\item
Let $\displaystyle \varphi (x)=\frac{x+\lambda}{x+1},~\lambda > 0.$ 
Then $\displaystyle \varphi ^{(n)}(x)=\frac{(1-\lambda)(-1)^{n-1}n!}{(x+1)^{n+1}}.$\\
If $0< \lambda <1,$ then $\varphi ^{(n)}>0$ for n odd and $\varphi ^{(n)}<0$ for n even.
In this case, $\varphi$ is a completely alternating function and the corresponding semigroup $\{S_t\}$ is completely hyperexpansive.
If $\lambda >1,$ then $\varphi ^{(n)}<0$ for n odd and $\varphi ^{(n)}>0$ for n even.
In this case, $\varphi$ is a completely monotone function and the corresponding semigroup $\{S_t\}$ is a subnormal contraction. 
\item
Let $\varphi(x)=e^x.$ Since $\log \varphi$ is convex, the corresponding semigroup $\{S_t\}$ is hyponormal. By Corollary \ref{p6} (4), this semigroup is not subnormal.
\eop 
\end{enumerate} \end{example}

In \cite[Theorem 1]{Fi}, it was proved that if a function $\varphi$ is completely monotone, then $\log \varphi$ is convex. We now present a different proof of the this fact using Corollary \ref{p6}.
\begin{proposition} \label{p12} If a function $\varphi$ is completely monotone, then $\log \varphi$ is convex.\end{proposition}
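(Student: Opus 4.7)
The plan is to translate the statement about functions into a statement about operators, then invoke the classical fact that every subnormal operator is hyponormal, and finally translate back to get a statement about the logarithm of the symbol. The infrastructure developed earlier in the paper (Theorem \ref{p2} and Corollary \ref{p6}) makes this a short chain of implications once the appropriate weighted translation semigroup is set up.

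Concretely, given a completely monotone function $\varphi$, the first step is to verify that $\varphi$ is a legitimate symbol of a weighted translation semigroup. Complete monotonicity gives $\varphi \geq 0$ and $\varphi' \leq 0$, so $\varphi$ is decreasing, and hence the associated weight function $\varphi_t(x)=\sqrt{\varphi(x)/\varphi(x-t)}$ satisfies $\varphi_t \leq 1$ almost everywhere for every $t \in \mathbb{R}_+$, as already observed in the remark following Proposition \ref{p35}. (If $\varphi$ vanishes at some point, then by monotonicity and non-negativity $\varphi$ vanishes identically from that point on; one restricts to the interval where $\varphi>0$ or simply assumes $\varphi$ positive, which is the standing assumption on symbols.) Thus the weighted translation semigroup $\{S_t\}$ with symbol $\varphi$ is well defined on $L^2(\mathbb{R}_+)$.

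The second step is to invoke Corollary \ref{p6}(4): since $\varphi$ is completely monotone, each $S_t$ is a subnormal contraction. The third step is the operator-theoretic fact that every subnormal operator is hyponormal; applied to each $S_t$, this shows the semigroup $\{S_t\}$ is hyponormal in the sense of Section 3. The final step is to apply Theorem \ref{p2}: since $\varphi$ is continuous and positive and $\{S_t\}$ is hyponormal, $\log \varphi$ is convex.

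I expect no real obstacle: the only small point to pin down is continuity of $\varphi$, which is automatic because $\varphi \in C^\infty$ by the definition of complete monotonicity, together with strict positivity of $\varphi$ so that $\log\varphi$ makes sense. Once those regularity items are disposed of, the proof is just the concatenation \emph{completely monotone $\Rightarrow$ subnormal contraction $\Rightarrow$ hyponormal $\Rightarrow$ $\log\varphi$ convex}, each arrow being supplied by a result already recorded in the excerpt.
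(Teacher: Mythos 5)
Your proposal is correct and follows exactly the paper's own argument: completely monotone symbol implies the weight functions are essentially bounded, Corollary \ref{p6}(4) gives a subnormal (contraction) semigroup, subnormality implies hyponormality, and Theorem \ref{p2} yields convexity of $\log\varphi$. The added remarks about positivity and smoothness of $\varphi$ are sensible housekeeping but do not change the route.
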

\begin{proof} Suppose a positive $C^\infty$ function $\varphi$ is completely monotone. Then the weight function $\varphi_t$ is essentially bounded. Therefore by Corollary \ref{p6} (4), the semigroup $\{S_t\}$ with symbol $\varphi$ is subnormal. This implies that the semigroup $\{S_t\}$ is hyponormal \cite[Proposition 4.2]{Co}. Hence by Theorem \ref{p2}, the function $\log \varphi$ is convex. \end{proof}

\section{Cauchy dual of hyperexpansive weighted translation semigroups $\{S_t\}$}
A notion of the {\it  Cauchy dual} of a left invertible operator was introduced by S. Shimorin \cite{Sh}. Recall that for a left invertible operator $T,$ the {\it Cauchy dual} $T^\prime$ of $T$ is defined as $T^\prime =T(T^*T)^{-1}.$ Note that an operator $T$ is left invertible if and only if $T$ is injective and range of $T$ is closed. 

In this section, we shall discuss the Cauchy dual of a hyperexpansive weighted translation semigroup. In particular, the Cauchy duals of completely hyperexpansive, 2-hyperexpansive and 2-isometric weighted translation semigroups have been discussed. Though the Cuachy duals of each of these classes of operators have been dealt with in the literature \cite{AC},\cite{At},\cite{Ch},\cite{Sh1}, the function theoretic considerations allow one to give considerably simpler proofs in the context of a weighted translation semigroup.       
Let $\{S_t\}$ be a weighted translation semigroup with symbol $\varphi .$  It is easy to check that for every $t\in \mathbb R_+,$ the operator $S_t$ is injective. Further, if we impose a condition  that for every $t\in \mathbb R_+, \inf_x \frac{\varphi(x+t)}{\varphi(x)} > 0,$ then the range of $S_t$ is closed, implying that operator $S_t$ is left invertible for every $t\in \mathbb R_+.$  We say that the {\it semigroup $\{S_t\}$ is left invertible} if every operator $S_t$ in that semigroup is left invertible. Thus the stated condition on $\varphi $ ensures the left invertibility of the semigroup $\{S_t\}.$ 
In this case, the operator $(S_t^*S_t)^{-1}S_t^*$ is a left inverse of $S_t.$ 
Observe that for any $t\in \mathbb R_+,$  
$$\displaystyle (S_t^*S_t)^{-1}f(x)=\frac{\varphi(x)}{\varphi(x+t)}f(x),~\forall x\geq 0.$$
Now it is easy to see that the Cauchy dual  $S_t^{\prime}$ of $S_t$ is given by  
\begin{equation*}
S_t^{\prime}f(x) =
\begin{cases}
\displaystyle \frac{1}{\varphi_t(x)}f(x-t) & \text{if~ $x\geq t$},\\
0 & \text{if~ $x<t$}.
\end{cases}
\end{equation*}
 
Observe that for $t\in \mathbb R_+,$ the family of operators $\{S_t^{\prime}\}$ also forms a semigroup. 
We say that the weighted translation semigroup $\{S_t^{\prime}\}$ is a {\it Cauchy dual of the weighted translation semigroup $\{S_t\}.$ }
\begin{remark} \label{p13} 
Note that if $\{S_t\}$ is a weighted translation semigroup with symbol $\varphi,$ then the symbol corresponding to the weighted translation semigroup $\{S_t^{\prime}\}$ is $\frac{1}{\varphi}.$ 
Observe that if the weight function of a left invertible operator $S_t$ is $ \varphi_t,$ then the weight function of its Cauchy dual $S_t^\prime$ is $\frac{1}{\varphi_t}.$ This resembles with the fact that the weight sequence of the Cauchy dual of a left invertible weighted shift $T:\{\alpha_n\}$ is $\{\frac{1}{\alpha_n}\}.$ 
\end{remark} 

\begin{remark} \label{p36} If $\{S_t\}$ is a hyperexpansive weighted translation semigroup with symbol $\varphi,$ then $\varphi$ is an increasing function and $\inf_x \frac{\varphi(x+t)}{\varphi(x)}\geq 1,$ for all $t\in \mathbb R_+.$ Thus every hyperexpansive weighted translation semigroup is left invertible. \end{remark}

It is known that if $T$ is a completely hyperexpansive weighted shift, then its Cauchy dual $T^\prime$ is a subnormal contraction \cite[Remark 4, Proposition 6]{At}. 
We now prove this result in the context of a weighted translation semigroup.
\begin{proposition} \label{p14} If the semigroup $\{S_t\}$ with symbol $\varphi\in C^\infty$ is a completely hyperexpansive weighted translation semigroup, then its Cauchy dual $\{S_t^\prime \}$ is a subnormal contraction.  
\end{proposition}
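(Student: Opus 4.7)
The plan is to exploit the characterizations already assembled in Corollary \ref{p6}, together with the identification of the symbol of the Cauchy dual semigroup provided by Remark \ref{p13}. By Remark \ref{p13}, the Cauchy dual $\{S_t^\prime\}$ is itself a weighted translation semigroup with symbol $\frac{1}{\varphi}$. Consequently, by Corollary \ref{p6}(4), proving that $\{S_t^\prime\}$ is a subnormal contraction reduces to proving the single function-theoretic statement that $\frac{1}{\varphi}$ is completely monotone on $\mathbb R_+$.

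First I would record that, because $\{S_t\}$ is completely hyperexpansive, Corollary \ref{p6}(3) says that $\varphi$ is completely alternating. In particular, $\varphi$ is positive (being a symbol) and, as already noted in Remark \ref{p36}, increasing. Positivity allows one to write the elementary Laplace identity
\[
\frac{1}{\varphi(x)} = \int_0^\infty e^{-s\varphi(x)}\,ds, \qquad x \in \mathbb R_+.
\]
Since a (pointwise convergent) nonnegative integral of completely monotone functions is again completely monotone, the proof then reduces to establishing that for each fixed $s \geq 0$ the function $x\mapsto e^{-s\varphi(x)}$ is completely monotone on $\mathbb R_+$.

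The main obstacle is precisely this composition assertion: if $\varphi$ is completely alternating (equivalently, a Bernstein-type function on $\mathbb R_+$), then $e^{-s\varphi}$ is completely monotone for every $s\geq 0$. I would argue this in one of two ways. The slicker route is to cite the classical composition theorem relating completely monotone and completely alternating functions from \cite{SSV}, applied to the completely monotone function $y\mapsto e^{-sy}$ and the completely alternating $\varphi$. Alternatively, one can give a self-contained proof using the integral representation from Proposition \ref{p35}(2),
\[
\varphi(x) = \varphi(0) + cx + \int_0^\infty \bigl(1 - e^{-ax}\bigr)\,d\mu(a),
\]
to factor $e^{-s\varphi(x)}$ into an exponential of a nonpositive linear term times an expression that a direct computation (or term-by-term differentiation justified by dominated convergence) shows to be completely monotone in $x$, finally using that products of completely monotone functions remain completely monotone.

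Once the composition fact is secured, the proof closes in one line: the integral displayed above exhibits $\frac{1}{\varphi}$ as a nonnegative superposition of completely monotone functions, hence $\frac{1}{\varphi}$ is completely monotone, and applying Corollary \ref{p6}(4) to the semigroup $\{S_t^\prime\}$ with symbol $\frac{1}{\varphi}$ gives that $\{S_t^\prime\}$ is a subnormal contraction, as required.
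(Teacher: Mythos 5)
Your proposal is correct, and the reduction is identical to the paper's: both invoke Remark \ref{p13} to identify the dual symbol as $\frac{1}{\varphi}$ and Corollary \ref{p6}(3),(4) to reduce everything to the purely function-theoretic claim that the reciprocal of a (positive) completely alternating function is completely monotone. Where you diverge is in how that claim is established. The paper differentiates the identity $\psi\varphi=1$ with $\psi=\frac{1}{\varphi}$ via the Leibniz rule, solves for $\psi^{(n)}\varphi$, and runs an elementary induction on the signs of the derivatives; this is entirely self-contained and uses nothing beyond calculus. You instead write $\frac{1}{\varphi(x)}=\int_0^\infty e^{-s\varphi(x)}\,ds$ (legitimate since the symbol is positive) and appeal to the classical composition theorem that a completely monotone function of a Bernstein function is completely monotone, plus stability of complete monotonicity under nonnegative superpositions and pointwise limits. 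Your route is more conceptual and more flexible -- the same Laplace-transform trick immediately gives, say, complete monotonicity of $\varphi^{-\alpha}$ for any $\alpha>0$ -- but it outsources the real work to the composition theorem in \cite{SSV} (the paper itself notes the claim is a special case of \cite[Theorem 3.6(ii)]{SSV} and deliberately avoids the citation). Two minor points to tighten: you should state explicitly that positivity of $\varphi$ (part of the definition of a symbol) is what licenses the Laplace identity; and in your ``self-contained'' variant via Proposition \ref{p35}(2), the measure $\mu$ need only satisfy $\int_0^\infty(1\wedge a)\,d\mu(a)<\infty$ and may be infinite, so the factorization of $e^{-s\varphi}$ requires a truncation or limiting argument before the product rule for completely monotone functions can be applied.
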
   
\begin{proof} In view of  Corollary \ref{p6} (3),(4) and Remark \ref{p13}, it is sufficient to prove that if $\varphi$ is a completely alternating function, then the function $\frac{1}{\varphi}$ is completely monotone. This result is a special case of \cite[Theorem 3.6(ii)]{SSV}. However, we give a direct and simple proof of this fact. 
Let $\psi =\frac{1}{\varphi}.$
As $\varphi$ is a completely alternating function, derivatives of $\varphi$ of odd orders are non-negative and derivatives of $\varphi$ of even orders are non-positive.
We need to show that derivatives of $\psi $ of odd orders are non-positive and derivatives of $\psi$ of even orders are non-negative.
Now $$ \psi ^\prime =\frac{-1}{\varphi ^2}\varphi ^\prime \leq 0,~~ \psi ^{\prime \prime} =\frac{-1}{\varphi ^2}\varphi ^{\prime \prime} + \frac{2}{\varphi ^3}{\varphi ^\prime}^2 \geq 0.$$
Suppose $k$ and $n$ are positive integers.
Suppose $\psi ^{(k)}\leq 0$ if $k$ is odd and $\psi ^{(k)}\geq 0$ if $k$ is even for $k<n.$
We now want to prove that $\psi ^{(n)}\leq 0$ if $n$ is odd and $\psi ^{(n)}\geq 0$ if $n$ is even.
Note that $\psi \varphi =1.$ Therefore $(\psi \varphi)^{(n)} =0.$
We have $$0=\psi ^{(n)}\varphi +{n\choose 1}\psi ^{(n-1)}\varphi ^\prime +{n\choose 2}\psi ^{(n-2)}\varphi ^{\prime \prime} +\cdots +{n\choose {n-1}}\psi ^\prime \varphi ^{(n-1)}+\psi \varphi ^{(n)}.$$
Therefore  
$$\psi ^{(n)}\varphi =-{n\choose 1}\psi ^{(n-1)}\varphi ^\prime -{n\choose 2}\psi ^{(n-2)}\varphi ^{\prime \prime} -\cdots -{n\choose {n-1}}\psi ^\prime \varphi ^{(n-1)}-\psi \varphi ^{(n)}. $$
Suppose $n$ is odd, then $n-1$ is even. Therefore $\psi ^{(n-1)}\geq 0,\varphi ^\prime \geq 0.$ Since $n-2$ is odd, $\psi ^{(n-2)}\leq 0,\varphi ^{\prime \prime} \leq 0.$ Continuing this argument, $\psi ^\prime \leq 0, \varphi ^{(n-1)}\leq 0$ and $\psi > 0,\varphi ^{(n)}\geq 0.$
Since each term on right hand side is non-positive, the left hand side $\psi ^{(n)}\varphi \leq 0,$ but $\varphi > 0.$ Hence $\psi ^{(n)} \leq 0.$
Suppose $n$ is even, then $n-1$ is odd. Therefore $\psi ^{(n-1)}\leq 0,\varphi ^\prime \geq 0.$ Since $n-2$ is even, $\psi ^{(n-2)} \geq 0,\varphi ^{\prime \prime} \leq 0.$ Continuing this argument, $\psi ^\prime \leq 0, \varphi ^{(n-1)}\geq 0$ and $\psi > 0,\varphi ^{(n)}\leq 0.$
Since each term on right hand side is non-negative, the left hand side $\psi ^{(n)}\varphi \geq 0,$ but $\varphi >0.$ Therefore $\psi ^{(n)} \geq 0.$ Hence $ \psi =\frac{1}{\varphi}$ is a completely monotone function.
\end{proof}

\begin{remark} \label{p15}
The technique used in the proof of \ref{p14} allows one to give a simpler proof of the fact that if $T$ is a completely hyperexpansive weighted shift, then its Cauchy dual $T^\prime$ is a subnormal contraction. 
Recall that a weighted shift operator $T:\{\alpha_n\},$ with weight sequence $\{\alpha_n\}$ on a Hilbert space $H,$ with orthonormal basis $\{e_n\}_{n=0}^\infty$ is defined by $Te_n=\alpha_ne_{n+1}.$ The sequence $\{\beta_n\}$ associated with $T$ is defined by $\beta_0=1,\beta_n=\prod_{k=0}^{n-1}\alpha_k^2~~(n\geq 1).$ Note that $\beta_n=||T^ne_0||^2$ and $\alpha_n=\sqrt{\frac{\beta_{n+1}}{\beta_n}},~n\geq 0.$ Recall that the Cauchy dual of a weighted shift operator $T:\{\alpha_n\},$ is a weighted shift operator $T^\prime :\{\frac{1}{\alpha_n}\}.$
Recall that a weighted shift $T$ is completely hyperexpansive (subnormal contraction) if and only if the sequence $n\rightarrow \beta_n$ is completely alternating (completely monotone). The forward difference operator on $\mathbb N$ is defined as $(\Delta \phi)(n)=\phi (n+1)-\phi (n).$ 
The map $\phi$ is completely alternating if and only if $(-1)^k\Delta ^k \phi (n)\leq 0$ and the map $\phi$ is completely monotone if and only if $(-1)^k\Delta ^k \phi (n)\geq 0.$ Let $\varphi(n)=\beta_n$ and $\psi(n)=\frac{1}{\varphi(n)}.$
A formula similar to Leibnitz theorem for the derivative of a product of two functions is as follows:
$$(\Delta ^n \phi \psi)(x)=\sum_{k=0}^n {n\choose k} (\Delta ^k \phi) (x)(\Delta ^{n-k} \psi )(x+k).$$
Now using the argument similar to that in the proof of Proposition \ref{p14}, it can be proved that if the sequence $\varphi(n)$ is completely alternating, then the sequence $\psi(n)=\frac{1}{\varphi(n)}$ is completely monotone. \end{remark}

\begin{remark}
In view of Corollary \ref{p6} and Proposition \ref{p14}, it is instructive to account for a procedure to generate a weighted shift operator from a weighted translation semigroup and vice versa in the context of a completely hyperexpansive weighted translation semigroup and its Cauchy dual.  
Let $\{S_t\}$ be a completely hyperexpansive weighted translation semigroup with symbol $\varphi \in C^\infty. $ Define  sequences $\beta_n$ and $\alpha_n$ by  $$\beta_n=\varphi(n)\  {\rm and}\  \alpha_n=\sqrt{\frac{\beta_{n+1}}{\beta_n}}=\sqrt{\frac{\varphi(n+1)}{\varphi(n)}}.$$ It is clear that the weighted shift operator with weight sequence $\{\alpha_n\}$ is completely hyperexpansive. 
For the reverse process, we need to start with a completely hyperexpansive weighted shift operator $T$ such that the corresponding completely alternating sequence $\{\beta_n\}_{n\geq 0}$ is {\it minimal .} Recall that a sequence $\{\beta_n\}$ is said to be minimal if the sequence $\{\beta_0,\beta_1-\epsilon,\beta_2-\epsilon,\cdots \}$ is not completely alternating for any positive $\epsilon.$  By an application of \cite[Theorem 1]{AR}, there exists a completely alternating function $\varphi $ on $\mathbb R_+$ satisfying $\varphi(n)=\beta_n.$ Now  the semigroup $\{S_t\}$ with symbol $\varphi$ is clearly a  completely hyperexpansive weighted translation semigroup. 
One may apply a similar procedure with complete hyperexpansion replaced by a subnormal contraction with appropriate changes in the definition of minimality of a completely monotone sequence. The details can be carried out by using \cite[Chapter 4, Definition 14a, Theorem 14b]{W-1}. 
The following diagram depicts the association of completely hyperexpansive weighted shift operators and completely hyperexpansive weighted translation semigroups as well as their respective Cauchy duals, as described above:
\begin{center}
\begin{tabular}{ccccccc}  
$T:\{\alpha_n\}$ & $\longleftrightarrow$ & $\{\beta_n\}$ & $\longleftrightarrow$ &  $\varphi(x)$ & $\longleftrightarrow$ & $\{S_t\}$ \\
 & & & & & & \\
$\big\updownarrow$ &                  &        &    &     &    &  $\big\updownarrow$  \\
 & & & & & & \\ 
$T^\prime:\{\frac{1}{\alpha_n}\}$ & $\longleftrightarrow$ & $\{\frac{1}{\beta_n}\}$ & $\longleftrightarrow$ &  $\frac{1}{\varphi(x)}$ & $\longleftrightarrow$ & $\{S_t^\prime \}$
\end{tabular}
\end{center}

\end{remark}
In the light of the positive result in case of a weighted shift operator, one might expect that the Cauchy dual of a completely hyperexpansive operator is a subnormal contraction. Though the problem still remains unsettled, in the special case of a 2-isometry, it was proved that the Cauchy dual of a 2-isometric weighted shift is a subnormal contraction \cite[Theorem 2.5(3)]{AC}. The proof of this fact in the special case of a 2-isometric weighted translation semigroup is trivial.     

We now turn our attention to the class 2-hyperexpansive weighted translation semigroups. Note that the class of 2-hyperexpansive operators is strictly bigger than the class of completely hyperexpansive operators. In \cite{Sh1} and \cite[Theorem 2.9]{Ch}, it is proved that if $T$ is a 2-hyperexpansive operator, then its Cauchy dual is a hyponormal contraction. We now present a special case of this result for a 2-hyperexpansive weighted translation semigroup. 
\begin{proposition} \label{p16} If the semigroup $\{S_t\}$ with symbol $\varphi \in C^2$ is a \\2-hyperexpansive weighted translation semigroup, then its Cauchy dual $\{S_t^\prime \}$ is a hyponormal contraction.\end{proposition}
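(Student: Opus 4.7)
The plan is to decompose the statement into its two assertions — hyponormality and contractivity of each $S_t^\prime$ — and handle them separately using the characterizations already assembled in the paper, rather than appealing to the general operator-theoretic proofs in \cite{Sh1,Ch}.

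First I would translate the hypothesis into symbol language. By Corollary \ref{p6}(1), the assumption that $\{S_t\}$ is 2-hyperexpansive with $\varphi \in C^2$ is equivalent to $\varphi$ being a (positive) concave $C^2$ function. By Remark \ref{p13}, the symbol of the Cauchy dual semigroup $\{S_t^\prime\}$ is $\psi=1/\varphi$, so everything reduces to checking two analytic properties of $\varphi$.

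For hyponormality of $\{S_t^\prime\}$, I would apply Theorem \ref{p2} to the dual symbol: it suffices to show that $\log\psi = -\log\varphi$ is convex, i.e.\ that $\log\varphi$ is concave. A short direct computation gives
\begin{equation*}
(\log\varphi)^{\prime\prime} \;=\; \frac{\varphi^{\prime\prime}}{\varphi} \;-\; \left(\frac{\varphi^{\prime}}{\varphi}\right)^{\!2},
\end{equation*}
and since $\varphi>0$ and $\varphi^{\prime\prime}\le 0$ (concavity), both terms are non-positive, so $(\log\varphi)^{\prime\prime}\le 0$. Hence $\log\varphi$ is concave, $-\log\varphi$ is convex, and Theorem \ref{p2} delivers hyponormality of $\{S_t^\prime\}$. (Note that the semigroup is left invertible by Remark \ref{p36}, so the Cauchy dual is defined to begin with.)

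For the contractivity half I would invoke the observation already made in the paper just after Corollary \ref{p6}: a non-negative concave $C^2$ function on $\mathbb{R}_+$ is increasing (\cite[Lemma 3.16]{CS}), so $\varphi(x-t)\le\varphi(x)$ for all $x\ge t$. The norm formula $\|S_t^\prime\|=\|1/\varphi_t\|_\infty$ together with the explicit expression for $\varphi_t$ then yields
\begin{equation*}
\|S_t^\prime\|^2 \;=\; \operatorname*{ess\,sup}_{x\ge t}\frac{\varphi(x-t)}{\varphi(x)} \;\le\; 1,
\end{equation*}
so each $S_t^\prime$ is a contraction. Combining the two parts completes the proof.

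The computation is essentially routine; the only place where one must be slightly careful is the hyponormality step, since one must verify that the hypothesis of Theorem \ref{p2} (a \emph{positive} continuous symbol whose logarithm is convex) applies to $1/\varphi$, which it does automatically from $\varphi>0$ and $\varphi\in C^2$. There is no real obstacle — the work is done once we recognize that concavity of $\varphi$ is a strictly stronger statement than concavity of $\log\varphi$, which is precisely the content we need.
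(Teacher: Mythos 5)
Your proposal is correct and follows essentially the same route as the paper: reduce via Corollary \ref{p6}(1), Remark \ref{p13} and Theorem \ref{p2} to showing that $\log\frac{1}{\varphi}$ is convex when $\varphi$ is positive and concave, which both you and the paper verify by the same second-derivative computation (you compute $(\log\varphi)^{\prime\prime}\le 0$, the paper equivalently computes $(\log\frac{1}{\varphi})^{\prime\prime}\ge 0$). The only difference is that you explicitly supply the contraction half via monotonicity of $\varphi$ and the norm formula, a step the paper's proof leaves implicit; that is a welcome addition rather than a divergence.
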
  
\begin{proof} In view of the Corollary \ref{p6} (1), Theorem \ref{p2} and Remark \ref{p13}, it is sufficient to prove that if $\varphi$ is a concave function, then $\log \frac{1}{\varphi}$ is a convex function. Since $\varphi$ is a concave function, $\varphi ^{\prime \prime} \leq 0.$
Let $\psi =\log \frac{1}{\varphi}.$
Then $$\psi ^\prime =\frac{-1}{\varphi }\varphi ^\prime ,~~ \psi ^{\prime  \prime} =\frac{1}{\varphi ^2}{\varphi ^\prime}^2-\frac{1}{\varphi }\varphi ^{\prime \prime} \geq 0.$$
Hence $\psi =\log \frac{1}{\varphi}$ is a convex function.
\end{proof}
 
\begin{remark} \label{p40} If $\{S_t\}$ is a weighted translation semigroup, then the semigroup property implies that for any non-negative integer $n,$ the operator $S_t^n= S_{nt}$ again belongs to the semigroup  $\{S_t\}.$ In particular, if $\{S_t\}$ is a hyponormal semigroup, the each operator $S_t$  is power hyponormal. Thus the Cauchy dual $S_t^\prime$ of a 2-hyperexpansive operator $S_t$ is power hyponormal. \end{remark}

Note that the Cauchy dual of a completely hyperexpansive weighted translation semigroup is a subnormal contraction (Proposition \ref{p14}) and the Cauchy dual of a 2-hyperexpansive weighted translation semigroup is a hyponormal contraction (Proposition \ref{p16}). Thus in this context, we now present an example of a 2-hyperexpansive weighted translation semigroup $\{S_t\}$ whose Cauchy dual is not a subnormal contraction. In view of the Remark \ref{p40}, the Cauchy dual operator $S_t^\prime$ in the following example is power hyponormal which is not subnormal.
\begin{example} Let $\{S_t\}$ be a weighted translation semigroup with symbol $\varphi$ given by  $\varphi(x)=2x-\log(\cosh(x-10))+100.$ Observe that $\varphi >0$ and $\varphi \in C^\infty.$ Note that $\varphi^\prime(x)=2-\tanh(x-10)\geq 0$ and 
$\varphi^{\prime\prime}(x)={\tanh}^2(x-10)-1\leq 0.$ Therefore the function $\varphi$ is concave implying that the semigroup $\{S_t\}$  is 2-hyperexpansive. Now 
$$\varphi^{\prime\prime\prime}(x)=2\tanh(x-10)(1-{\tanh}^2(x-10)).$$ Observe that $\varphi^{\prime\prime\prime}(11)>0$ and $\varphi^{\prime\prime\prime}(0)<0.$ Hence the function $\varphi$ is not completely alternating. Consequently, the semigroup $\{S_t\}$ is not completely hyperexpansive.
 Let $\psi(x)=\frac{1}{\varphi(x)}.$ Then $$\psi^\prime(x)=-\frac{1}{\varphi^2(x)}\varphi^\prime(x) \leq 0,~\psi^{\prime\prime}(x)=-\frac{1}{\varphi^2(x)}\varphi^{\prime\prime}(x)+\frac{2}{\varphi^3(x)}\varphi^\prime \geq 0$$ and 
$$\psi^{\prime\prime\prime}(x)=\frac{1}{\varphi^2(x)}\left(-\varphi^{\prime\prime\prime}(x)+\frac{4}{\varphi(x)}\varphi^{\prime\prime}(x)-\frac{6}{\varphi^2(x)}\varphi^\prime(x)\right).$$
By direct computations we get that $\psi^{\prime\prime\prime}(11)<0$ and $\psi^{\prime\prime\prime}(0)>0.$ Hence the function $\psi$ is not completely monotone. Thus the semigroup $\{S_t^\prime\}$ is not a subnormal contraction. \eop 
\end{example}
The problem of describing the Cauchy dual of an alternatingly hyperexpansive operator, even in the special case of a weighted translation semigroup seems to be difficult.  The following examples illustrate the situation in some simple cases. In view of  Corollary \ref{p6}(5) and Remark \ref{p13}, we need to examine the reciprocal of an absolutely monotone function.   
\begin{example} \
\begin{enumerate}
\item Let $\varphi(x)=e^x.$ It is an absolutely monotone function. Observe that the function $\frac{1}{\varphi(x)}=e^{-x}$ is completely monotone.
\item
The following result is a special case of \cite[Proposition 4.3]{AC}. Let $\varphi(x)=x^2+px+q,~p,q>0.$ It is an absolutely monotone function. Suppose $\varphi(x)$ has both complex roots. 
It is easy to see that the function $\frac{1}{\varphi}$ is not completely monotone. 
\item Let $\varphi(x)=x^2+px+q,~p,q>0.$ Suppose $\varphi(x)$ has both real roots. It can be seen that the function $\frac{1}{\varphi}$ is completely monotone. 
\eop
\end{enumerate}
\end{example}

\section{Concluding Remarks} 
The present work attempts to study the behaviour of an operator $S_t$ with a motivation to compare it with a weighted shift operator. The appearance of a function in place of a sequence allows one to use different techniques, not available in the discrete case. The authors believe that these techniques might be employed to tackle some problems about weighted shifts. In particular, the problem of characterizing $m$-isomeric transformations whose Cauchy dual is a contractive subnormal operator is challenging even in the case of a weighted shift operator. Though some special cases of this problem have been discussed in \cite{AC}, the general case still remains unanswered. 
In a separate article as a sequel to this, authors have developed an analytic model for left invertible weighted translation semigroups enabling one to realize every left invertible operator $S_t$  as a multiplication by $z$ on some suitable reproducing kernel Hilbert space. In particular the model applies to hyperexpansive weighted translation semigroups. Further we shall also describe the spectral picture of a left invertible weighted translation semigroup. In the process, we shall point out that in contrast with the situation for weighted shift operators, the kernel of the adjoint $S_t^*$ is infinite dimensional. 

\subsection*{Acknowledgment} The authors are thankful to an unknown referee for pointing out some corrections in the original version and offering some useful suggestions. The authors would also like to thank Sameer Chavan for several useful discussions throughout the preparation of this paper.

\end{document}